\documentclass{jaums}
\usepackage{amsmath}
\usepackage{amsfonts}
\usepackage{latexsym}
\usepackage{amssymb}
\usepackage[arrow, matrix, curve]{xy}
\usepackage{enumerate}
\usepackage[dvips]{graphics}

\theoremstyle{thmit} 
\newtheorem{thm}{Theorem}[section]

\newtheorem{cor}[thm]{Corollary}
\newtheorem{prop}[thm]{Proposition}
\theoremstyle{thmrm} 
\newtheorem{exa}{Example}

\newtheorem*{oldproof}{Proof}
\renewenvironment{proof}[1][{}]{\begin{oldproof}[#1]}{\qed\end{oldproof}}
\newtheorem{definition}{Definition}

\newcommand{\C}{{\mathbf C}}
\newcommand{\R}{{\mathbf R}}
\newcommand{\Z}{{\mathbf Z}}

\newcommand{\rk}{{\rm {rk}}}
\newcommand{\coker}{{\rm coker}}
\newcommand{\ind}{{\rm ind}}
\newcommand{\MM}{{\frak {m}}}
\title[Telescopic linkages and phase transitions]
 {Telescopic linkages and topological approach to phase transitions}

\author{Michael Farber and Viktor Fromm}        
\date{October 5, 2010}          

\address{Department of Mathematical Sciences, University of Durham, UK}

\email{Michael.Farber@durham.ac.uk}
\email{viktor.fromm@durham.ac.uk}

\subjclassyear{2010}
\subjclass{Primary 55R80; Secondary 82B26}
\keywords{Betti numbers, homology, linkage, configuration space, telescopic leg, phase transition, topological hypothesis}
\begin{document}
\maketitle
\begin{abstract} A topological approach to the theory of equilibrium phase transitions in statistical physics is based on the {\it Topological Hypothesis} (TH), which claims that 
phase transitions are due to changes of the topology of suitable submanifolds in the configuration space (\cite{CPC2003, Pettini}). In this paper we examine in detail the anti-ferromagnetic mean-field $XY$ model and study topology of the sub-energy manifolds. The latter can be interpreted mechanically as the configuration space of a linkage with one telescopic leg. We use methods of Morse theory to describe explicitly the Betti numbers of this configuration space. We apply these results to the 
anti-ferromagnetic mean-field $XY$ model and compute the exponential growth rate of the total Betti number. The previous authors instead of the total Betti number studied the Euler characteristic. We show that in the presence of an external magnetic field the model undergoes a single \lq\lq total Betti number phase transition\rq\rq.
\end{abstract}

\begin{center}
\it Dedicated to Alan Carey, on the occasion of his 60$\,^{th}$ birthday
\end{center}

\section{ Thermodynamical phase transitions
and topological changes in configuration spaces }\label{s:1}

Equilibrium phase transitions are non-analytic points of thermodynamic observables. Modern physics has accumulated experimental evidence that some new mathematical mechanisms might be relevant to phase transitions in certain systems. 
A recent mathematical approach to phase transitions is based on the {\it Topological Hypothesis} (TH) which claims that at their deepest level phase transitions are due to changes of the topology of suitable submanifolds in the configuration space, see for example \cite{CPC2003, Pettini} and references therein.

 One knows that for a system with the Hamiltonian
 $$H= \frac{1}{2} \sum_{i=1}^N p_i^2 +V(q_1, \dots, q_N),$$
assuming that $N$ is large,
 at any given value of the inverse temperature $\beta$, the effective support of the canonical measure is very close to a single
 equipotential hypersurface
 $$\Sigma_v\equiv\{q\in \Gamma_N: V(q)=vN\}.$$ Here
  $\Gamma_N$ is the {\it configuration space} and
 $V: \Gamma_N \to \R$ is the {\it potential}.
 The TH claims that changes of the topology of the hypersurfaces $\Sigma_v$ or of the manifolds with boundary
 \begin{eqnarray}\label{mv}
\mathcal M_v=\{q\in \Gamma_N; V(q) \le vN\}
\end{eqnarray}
 (for $N\to \infty$) are the reason for the singular behavior of thermodynamic observables in phase transitions.
 One such observable is the configurational canonical free energy
 $$f_N(\beta) = -\frac{1}{N\beta} \ln \int_{\Gamma_N} e^{-\beta V(q)}dq.$$
 In \cite{Angelani2005}, \cite{Kastner08}
 the following quantity
 \begin{eqnarray}\label{sigmav}
 \sigma(v) = \lim_{N\to \infty} \frac{1}{N} \ln | \chi(M_v)|\end{eqnarray}
 involving the Euler characteristic $\chi(M_v)$ is studied as a function of $v$ and its non-smoothness is related to the phase transitions.
 Recall that in the case when the potential $V$ is Morse, the Euler characteristic $\chi(M_v)$ equals the sum $$\sum_{i\ge 0} (-1)^i \mu_i (V)$$ where $\mu_i(V)$ denotes the number of critical points of $V$ of Morse index $i$ lying in $M_v$. In the case when $V$ is Morse-Bott there is a similar formula
 $$\chi(M_v) = \sum_Z (-1)^{\ind(Z)} \chi(Z),$$ where $Z$ runs over the critical submanifolds of $V$, see \cite[Corollary 5.3]{Farber2004}.

 In  \cite{FP2004} a version of
 the TH was proven for a class of short-range models (Franzosi-Pettini Theorem) with potentials of the form
 $$V(q) = \sum_{i=1}^N \phi(q_i) + \sum_{i,j=1}^N  c_{ij}\psi(||q_i-q_j||),$$
 where $|\psi (x)|$ decreases faster than $x^{-d}$ with $d$ denoting the spatial dimension of the system.

For the mean-field $k$-trigonometric model (\cite{Angelani2005}), which is characterized by the potential
 $$V_k(q) = \frac{1}{N^{k-1}} \sum_{i_1, \dots, i_k=1}^N [1-\cos(q_{i_1}+\dots +q_{i_k})],$$
where $q_i\in [0,2\pi]$ are the angular variables, 
 it was shown that phase transitions occur for $k\ge 2$ and the function $\sigma(v)$ in (\ref{sigmav}), is smooth for $k=1$ and nonsmooth for $k\ge 2$. 
Moreover,  for $k\ge 2$, $\sigma(v)$ is nonsmooth precisely at
 the values of the energy at which the phase transitions occur.  Thus, the purely topological quantity $\sigma(v)$ signals the absence or presence of a phase transition. This observation as well as similar findings for other models motivate the further study of the relations between thermodynamic phase transitions and the topology of the family
 $\mathcal M_v$. In the literature there are also results of negative character   concerning the TH, see \cite{Kastner04, Teixiera}.

\section{The anti-ferromagnetic mean-field $XY$-model and the robot arm}\label{s:2}

Christian Mazza pointed out on a similarity between the study of the robot arm in \cite[\S\S 1.5 and 3.7]{Fa3} and
the anti-ferromagnetic mean-field $XY$ model (\cite{LCJ, DHR}). 
This observation was our point of departure; it motivated us to study a new class of linkages
having one telescopic leg (i.e. a leg with length variable in certain interval) \cite{Fa4}; more details are given below.

The anti-ferromagnetic mean-field $XY$-model has
 potential of the form
\begin{eqnarray}\label{potential}
V = \frac{1}{2N} \sum_{i,j} \cos(\theta_i-\theta_j) -h \sum_i \sin \theta_i,\end{eqnarray}
 where $\theta_i\in [0, 2\pi]$, $i=1, \dots, N$, are angular parameters (classical rotators).
 Here $h$ denotes {\it an external magnetic field.} 

If $\MM$ denotes the complex magnetization vector
 \begin{eqnarray}\label{magnet}
 \MM=\frac{1}{N} \sum_{j=1}^N \exp{i\theta_j}
 \end{eqnarray} 
and $$\MM_0=-ih\in \C,$$ then 
%
%
\begin{eqnarray}\label{vm}
\left|\MM+\MM_0\right|^2 = \frac{2}{N}V +h^2.
\end{eqnarray}
We see that the sub-level set $\mathcal M_v$ given by (\ref{mv}) 
coincides with the set 
\begin{eqnarray}
{\mathcal M}_v \, =\, \{q; |\MM+\MM_0|^2 \le 2v +h^2\}.\end{eqnarray}
In this case $\Gamma_N$ is the $N$-dimensional torus $S^1 \times \dots\times S^1=T^N$; the symbol $q$ denotes a point 
$q=(e^{i\theta_1}, \dots, e^{i\theta_N})\in \Gamma_N$. 
For $h\in (0,1)$ the parameter $v$ may vary in the following interval
\begin{eqnarray}\label{interval}
-\frac{h^2}{2} \, \le \, v\, \le \, h+\frac{1}{2}.
\end{eqnarray}
However for $h\ge 1$ ({\it strong magnetic field}) the interval of variation of $v$ is actually smaller
\begin{eqnarray}\label{interval1}
-h + \frac{1}{2} \, \le \, v\, \le \, h+\frac{1}{2}.
\end{eqnarray}

%
%

Differentiating the equation (\ref{vm}) with respect to the angular variable $\theta_j$ we obtain
$$\langle \frac{\partial \MM}{\partial \theta_j}, \MM+\MM_0\rangle = \frac{1}{N}\frac{\partial V}{\partial \theta_j}$$ where the brackets $\langle\, \, , \, \rangle$ denote the 
Euclidean planar scalar product. Since clearly 
$$\frac{\partial \MM}{\partial \theta_j}= \frac{1}{N} e^{i(\theta_j+\pi/2)},$$ 
it follows that at a critical point of $V$ either  $\MM+\MM_0=0$ or, for any $j=1, \dots, N$,
the vector $e^{i\theta_j}$ is parallel to $\MM+\MM_0$. 
This implies that, for $h\not=0$, at any critical point of $V$ lying outside the submanifold $\MM+\MM_0=0$ (the ground state), 
one has 
$$\theta_j= \pm \pi/2,$$
i.e. a critical configuration is collinear; it is aligned in the direction of the imaginary axis. 

%
%
%
\begin{figure}[h]
\begin{center}
\resizebox{7cm}{6cm}{\includegraphics[71,370][520,745]{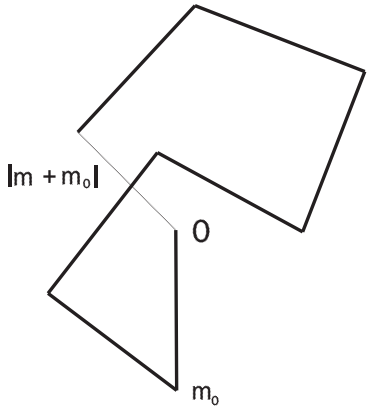}}
\end{center}
\caption{A robot arm. }\label{arm}
\end{figure}
Consider now a robot arm with $N+1$ bars (as shown on Figure \ref{arm}) where the first bar has length $\ell_1=h$ and is pointing from the origin to the point $\MM_0$ and the remaining $N$ bars are of 
length $\ell_j=1/N$, where $j=1, \dots, N$, as described in \cite[\S 1.5]{Fa3}. We assume that the initial point of the arm is fixed at the origin and the bars of the arm are connected to each other via revolving joints. 
 If $\theta_j$ is the angle between the $j$-th bar and the horizontal $x$-axis, then $\MM+\MM_0$ in (\ref{magnet}) is exactly the final position (the grip) of the arm. 

Clearly the length
$|\MM+\MM_0|$ of the dotted line in Figure \ref{arm} is closely related to the value of the robot arm distance map studied in \cite[Lemma 1.4]{Fa3}; the robot arm distance map is also described in the beginning of \S 4 below. 

Lemma 1.4 from \cite{Fa3} deals with the shapes of the arm, i.e. with the quotient of the configuration space with respect to
 the action of ${\rm {SO}}(2)$. The critical points of the robot arm distance map are the following: there is a critical submanifold $\MM+\MM_0=0$ (the ground state) which corresponds to the minimum of $V$. This submanifold has dimension $N-2$ and is diffeomorphic to the configuration space of a closed linkage with $N+1$ bars $(h, 1/N, \dots, 1/N)$. The other critical points are of Morse type; they correspond to collinear configurations of the arm, i.e. $\theta_i=\pm {\pi}/{2}$ where $i=2, \dots, N+1$. 

Translating this result into the language of thermodynamics we obtain {\it the bi-clus\-tering phenomenon} observed in \cite{AR, DHR} numerically.
It consists of the statement that under low temperature some of the rotators are likely to point in a fixed direction $\psi_0$ with the remaining rotators pointing in the opposite 
direction $\psi_0+\pi$. This follows from the fact that (a) the canonical measure for $T\to 0$ is concentrated near critical points of the energy $V$; 
(b) the critical points of the energy $V$ are the ground state $\MM+\MM_0=0$ and the Morse critical points corresponding to collinear configurations; 
(c) the ground state $\MM+\MM_0=0$ is highly degenerate. 

%


Using formula (\ref{vm}), the manifold $\mathcal M_v=\{q; V(q)\le Nv\}$ can be interpreted mechanically as the configuration space of a linkage with $N+1$ legs of fixed length 
$h, 1/N, 1, \dots, 1/N$ and one telescopic leg with its length varying between $0$ and $(2v+h^2)^{1/2}$. 
Recently we studied linkages with a telescopic leg whose length varies between two positive numbers 
 \cite{Fa4}.
In the next section, we give a modification
of the arguments of \cite{Fa4} which allows for a computation of the
Betti numbers in the important case when the telescopic leg is
allowed to contract to zero.

\section{Telescopic linkages}\label{s:3}

Motivated by the discussion of the previous section we study here the topology of configuration spaces of telescopic linkages and calculate their Betti numbers.  

Let $\ell=(\ell_1, \ell_2, \dots, \ell_n)$ be a fixed vector with positive real entries, $\ell_i>0$. Consider the variety of shapes $K_\ell$ 
of closed planar polygonal chains 
consisting of $n-1$ bars of fixed length (equal to $\ell_1, \dots, \ell_{n-1}$, correspondingly) and a telescopic leg whose length may vary between $0$ and $\ell_n>0$.
Formally $K_\ell$ is defined as follows. Consider the map $F: \C^n\to \R^n$ given by $$F(z_1, \dots, z_n)= (|z_2-z_1|, |z_3-z_2|, \dots, |z_1-z_n|), \quad z_i\in \C.$$
Then $K_\ell$ can be defined as 
$$K_\ell=F^{-1}(A)/E(2),$$ where $A\subset \R^n$ is the closed interval connecting the points 
$$\ell^-=(\ell_1, \dots, \ell_{n-1}, 0), \quad \mbox{and}\quad \ell=(\ell_1, \dots, \ell_{n-1}, \ell_n),$$ and $E(2)$ denotes the group of orientation preserving isometries of the plane $\C$,  acting diagonally on $\C^n$. Recall that 
$$M_\ell=F^{-1}(\ell)/E(2)\subset K_\ell$$
is the well-studied variety of shapes of closed polygonal chains with $n$ bars of length $\ell_1, \dots, \ell_n$; see \cite{Fa2, Fa3}.  

The manifold $K_\ell$ can also be understood as the variety of all configurations of a robot arm with $n-1$ bars of length $\ell_1, \dots, \ell_{n-1}$ such that the initial point $O$ is fixed, the first bar of length $\ell_1$ is pointing in the direction of the $x$-axis, and the end point of the arm (\lq\lq the grip\rq\rq) 
lies within a circle of radius $\ell_n$ with center at $O$, see Figure below.  
\begin{figure}[h]
\begin{center}
\resizebox{6cm}{4.1cm}{\includegraphics[37,419][555,791]{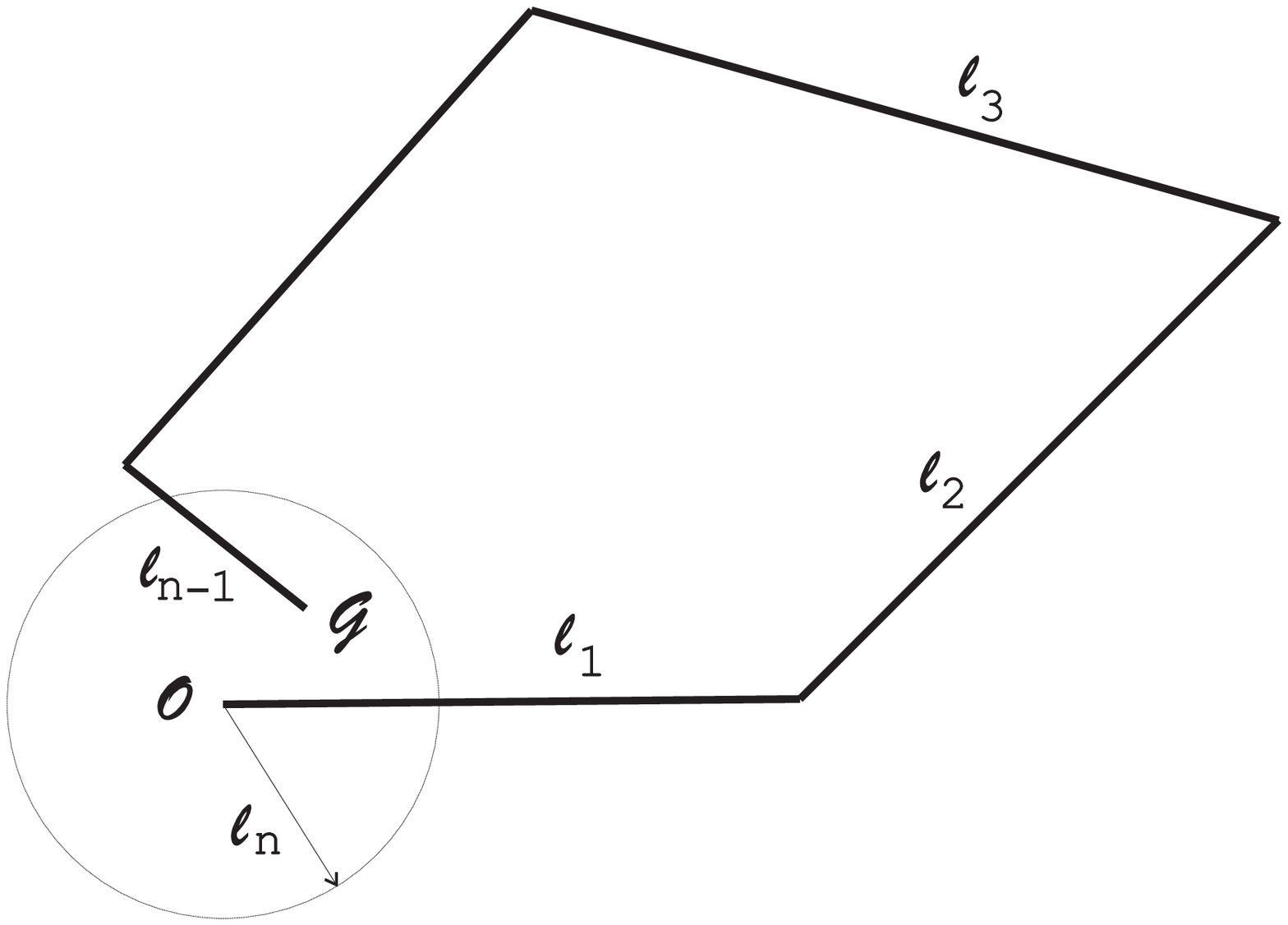}}
\end{center}
\end{figure}

Our goal in this paper is to compute the homology groups of $K_\ell$ explicitly as functions of the vector $\ell$ of metric data. In our previous paper \cite{Fa4} we considered configuration spaces of linkages having a telescopic leg not contractable to zero, i.e. such that its length may vary between two positive numbers. The results of the present paper complement those obtained in \cite{Fa4}. 

Recall that a vector $\ell= (\ell_1, \dots, \ell_n)$ is said to be {\it generic} if $\sum_{i=1}^n\epsilon_i\ell_i\not=0$ for all $\epsilon_i=\pm 1$. 

\begin{prop}\label{prop1}
If the length vector $\ell= (\ell_1, \dots, \ell_n)$ is generic then $K_\ell$ is a compact smooth $(n-2)$-dimensional manifold with boundary $\partial K_\ell=M_{\ell}$.
\end{prop}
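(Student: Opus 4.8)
The plan is to eliminate the $E(2)$-symmetry by a global slice and thereby realise $K_\ell$ as a sublevel set of a single smooth function on a torus. Since every leg has positive length, no configuration collapses to a point, so $E(2)$ acts freely on $F^{-1}(A)$; moreover each orbit contains a unique representative with $z_1=0$ and $z_2=\ell_1\in\R_{>0}$ (translate to fix the base point, then rotate the first bar onto the positive real axis). This identifies $K_\ell$ diffeomorphically with the set of directions $(\phi_2,\dots,\phi_{n-1})$ of the remaining $n-2$ bars, i.e.\ with a subset of the torus $T^{n-2}$, on which the grip is $z_n=\ell_1+\sum_{k=2}^{n-1}\ell_k e^{i\phi_k}$. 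Under this identification the telescopic constraint $|z_1-z_n|\le\ell_n$ becomes $g(\phi)\le\ell_n^2$, where $g(\phi)=|z_n(\phi)|^2$.

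The key point is that $g$ is a smooth function on the compact manifold $T^{n-2}$ (it is polynomial in the $\cos\phi_k,\sin\phi_k$), even though the defining map $F$ is not smooth along the locus where the telescopic leg degenerates to zero, since $w\mapsto|w|$ fails to be differentiable at $0$. Replacing the last component of $F$ by $g=|z_n|^2$ is precisely the modification needed to handle the contractible-leg case: the troublesome configurations $z_n=0$ now lie in the interior $\{g<\ell_n^2\}$ (as $\ell_n>0$), where smoothness is automatic. It therefore suffices to prove that $\ell_n^2$ is a regular value of $g$; for then $K_\ell=g^{-1}((-\infty,\ell_n^2])$ is a compact smooth $(n-2)$-dimensional manifold whose boundary is $g^{-1}(\ell_n^2)=M_\ell$, the variety of shapes with the last side restored to length $\ell_n$.

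To verify regularity I would compute $\partial g/\partial\phi_k=2\ell_k\operatorname{Re}(i e^{i\phi_k}\bar z_n)$, so that at a critical point with $z_n\ne0$ each movable bar direction $e^{i\phi_k}$ must be parallel to $z_n$. Together with the fixed first bar (direction $0$) and the reality of $\ell_1$, this forces $z_n$ onto the real axis and makes the whole configuration collinear, yielding a relation $\sum_{i=1}^n\epsilon_i\ell_i=0$ with all $\epsilon_i=\pm1$. The genericity hypothesis excludes exactly such relations, so there are no critical points on the level $\ell_n^2$, and regularity follows. Compactness of $K_\ell$ is immediate, being a closed subset of $T^{n-2}$.

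I expect the only real work to be the regularity computation of the last paragraph, and conceptually the choice of the smooth replacement $g=|z_n|^2$ so that the zero-length configurations are swept into the interior and handled automatically; the slice construction, the dimension count, and compactness are routine.
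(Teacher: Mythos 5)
Your proposal is correct and follows essentially the same route as the paper: the paper also identifies $K_\ell$ with a sublevel set of the smooth function $-\left|\sum_{i=1}^{n-1}\ell_iu_i\right|^2$ on the torus $W=T^{n-1}/{\rm SO}(2)\simeq T^{n-2}$ (your slice $z_1=0$, $z_2=\ell_1$ is exactly this quotient) and concludes from genericity that $-\ell_n^2$ is a regular value. The only difference is that where you carry out the critical-point computation by hand, the paper cites the known result from \cite{Fa2} that the critical points of this function are the collinear configurations together with $f^{-1}(0)=M_{\ell'}$; your direct verification is a fine, self-contained substitute.
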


A poof is given in the following section. 

It is clear that $K_\ell$ is diffeomorphic to $K_{\ell'}$ if the vector $\ell'=(\ell'_1, \dots, \ell'_n)$ is obtained from from $\ell$ by permuting the coordinates $\ell'_i=\ell_{\sigma(i)}$ 
where the permutation $\sigma: \{1, \dots, n\}\to \{1, \dots, n\}$ satisfies $\sigma(n)=n$. The $n$-th coordinate corresponding to the telescopic leg plays a special role. 
It follows that without loss of generality one may always assume that $\ell_1\le \ell_2\le \dots\le \ell_{n-1}$. 

Let $\ell=(\ell_1, \dots, \ell_n)\in \R^n$ be a vector with positive coordinates. A subset $J\subset \{1, \dots, n\}$ is said to be {\it short} with respect to $\ell$ if
\begin{eqnarray}\label{short}\sum_{i\in J} \ell_i \, <\,  \sum_{i\notin J} \ell_i.\end{eqnarray}
If the opposite inequality holds in (\ref{short}) then $J$ is said to be {\it long} with respect to $\ell$. A subset $J\subset \{1, \dots, n\}$ is called {\it median} if 
 $$\sum_{i\in J} \ell_i \, =\,  \sum_{i\notin J} \ell_i.$$

\begin{definition} 
Fix an index $i\in \{1, 2, \dots, n-1\}$ such that $\ell_i\ge \ell_j$ for any $j=1, \dots, n-1$.
We denote by $c_k(\ell)$ the number of $(k+1)$-element subsets $J\subset \{1, \dots, n\}$ which contain $i$, do not contain $n$ and are short or median with respect to $\ell$; here $k=0, 1, \dots, n-2$ .
Besides, for $k=1, \dots, n-2$ we denote by $d_k(\ell)$ the number of $(k+1)$-element subsets $J\subset \{1, \dots, n\}$ containing both $i$ and $n$ which are short with respect to $\ell$. 
\end{definition}

It will be convenient to extend this Definition by setting $d_k(\ell)=0$ for $k\le 0$. 

In \cite{Fa4} we introduced a symbol $\alpha_k(\ell)$ which equals the number of subsets of cardinality $k+1$ containing $n$ which are short with respect to $\ell$. 
One may express the numbers $c_k(\ell)$ and $d_k(\ell)$ introduced above through the quantities $\alpha_k$ as follows.
Assume for simplicity that $\ell_1\le \ell_2 \le \dots \le \ell_{n-1}$ (this can always be achieved by a permutation preserving the index of the telescopic leg). 
Then $d_k(\ell)$ equals
$\alpha_{k-1}(L)$ where $L=(\ell_1, \dots, \ell_{n-2}, \ell_{n-1}+\ell_n)$ is obtained by integrating the $(n-1)$-st and the $n$-th legs.
Besides, if we assume that $l$ is generic then $c_k(\ell)=\alpha_k(L')$ where $L'=(\ell_1, \dots, \ell_{n-2}, \ell_{n-1}-\ell_n)$.

\begin{thm}\label{thm1} For a telescopic linkage described above with a generic vector $\ell$, the homology group $H_k(K_\ell)$ of the configuration space $K_\ell$ is free abelian of rank 
\begin{eqnarray}\label{theresult}
c_k(\ell)+d_{n-3-k}(\ell), \quad k=0, 1, \dots, n-2.
\end{eqnarray}
\end{thm}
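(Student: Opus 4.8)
The plan is to realize $K_\ell$ as a sublevel set and run Morse theory. By Proposition \ref{prop1} and the robot-arm description preceding it, we regard $K_\ell$ as the region $\{g\le \ell_n^2\}$ inside the torus $T^{n-2}$ of arm configurations, where
$$g(\theta_2,\dots,\theta_{n-1})=|z|^2,\qquad z=\sum_{j=1}^{n-1}\ell_j e^{i\theta_j},$$
is the squared distance from the grip to the origin (the pinned first bar gives $\theta_1=0$). The boundary $\partial K_\ell=M_\ell$ is the level $\{g=\ell_n^2\}$, and genericity of $\ell$ makes $\ell_n^2$ a regular value: a critical configuration on that level would force $\sum_{i=1}^n\pm\ell_i=0$. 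Using the permutation invariance noted above, I would pin a \emph{longest} of the first $n-1$ bars, so that the distinguished index $i$ of the Definition becomes the pinned one.

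First I would determine the critical set of $g$. As in the computation for $V$ in \S\ref{s:2}, a configuration is critical iff $z=0$ or every bar is parallel to $z$; hence the critical points are the collinear configurations $\theta_j\in\{0,\pi\}$ together with the minimal critical submanifold $Z_0=\{z=0\}$, which is the polygon space $M_{(\ell_1,\dots,\ell_{n-1})}$ sitting at level $0$. For a collinear configuration with sign pattern $\epsilon=(\epsilon_1,\dots,\epsilon_{n-1})$, $\epsilon_1=+$, one has $z=r=\sum_i\epsilon_i\ell_i\in\R$, and the configuration lies in the interior of $K_\ell$ iff $|r|<\ell_n$. Expanding $g$ to second order yields the quadratic form
$$Q(t)=-r\sum_{j}\epsilon_j\ell_j\,t_j^2+\Bigl(\sum_{j}\epsilon_j\ell_j\,t_j\Bigr)^2,$$
a rank-one update of a diagonal form. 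Applying the matrix determinant lemma, with $1+v^{\top}D^{-1}v=\ell_1/r$, the Morse index comes out to be $|A|-1$ when $r>0$ and $(n-1-|A|)-1$ when $r<0$, where $A$ is the set of forward-pointing bars. Translating the conditions $|r|<\ell_n$ and $\operatorname{sign}(r)$ into the language of short, long and median subsets, one checks that the interior collinear critical points of index $k$ are indexed by subsets lying in a fixed "short band"; the identities $c_k(\ell)=\alpha_k(L')$ and $d_k(\ell)=\alpha_{k-1}(L)$ then serve to recognise these counts in terms of $c_k$ and $d_{n-3-k}$.

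The remaining and hardest step is to pass from the critical-point count to the actual free ranks, i.e.\ to prove that $H_*(K_\ell)$ is free abelian of the stated rank. The main obstacle is the degenerate minimum $Z_0$: the bottom fiber $M_{(\ell_1,\dots,\ell_{n-1})}$ genuinely interacts with the incident index-one handles — already for $n=4$ one finds two components of $Z_0$ joined by a handle — so $g$ is \emph{not} perfect in the naive sense and the homology of $K_\ell$ is strictly smaller than the total critical count. Moreover $Z_0$ need not even be smooth, since $\ell$ generic does not force $(\ell_1,\dots,\ell_{n-1})$ generic. To sidestep this I would not build $K_\ell$ from the bottom but use the long exact sequence of the pair $(T^{n-2},K_\ell)$. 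By excision $H_*(T^{n-2},K_\ell)\cong H_*(\{g\ge\ell_n^2\},M_\ell)$, and the complementary region $\{g\ge\ell_n^2\}$ contains \emph{only} nondegenerate collinear critical points (the long subsets), so this relative homology is computed by a Morse complex with no degenerate stratum. Combined with the known homology $H_*(T^{n-2})=\bigwedge^{*}\Z^{n-2}$ this pins down $H_*(K_\ell)$, avoiding $Z_0$ altogether.

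The technical heart is then to show that this relative Morse complex is perfect, that the connecting maps $H_k(T^{n-2})\to H_k(T^{n-2},K_\ell)$ are computable on the toral generators, and that no torsion appears. I would establish this by adapting the perfectness argument of \cite{Fa4} together with the computation of the Betti numbers of planar polygon spaces from \cite{Fa2,Fa3}; the same $\alpha$-identities recorded before the theorem statement then let one assemble the outcome into the free group of rank $c_k(\ell)+d_{n-3-k}(\ell)$.
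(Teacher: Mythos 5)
Your setup coincides with the paper's: you realize $K_\ell$ as a sublevel set of $g=|z|^2$ on the torus $W\simeq T^{n-2}$, you correctly identify the critical set (the degenerate minimum $Z_0=M_{(\ell_1,\dots,\ell_{n-1})}$, which as you note may even be singular, plus nondegenerate collinear configurations, with indices agreeing with \cite{Fa2}), and your decision to avoid $Z_0$ by passing to the complementary region and a long exact sequence of a pair is exactly the paper's move --- the paper works with the pair $(W,W^{a})$, where $W^{a}$ is a slightly shrunken copy of your region $\{g\ge \ell_n^2\}$, together with excision and Lefschetz duality, which carries the same information as your pair $(T^{n-2},K_\ell)$.

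However, there is a genuine gap at the point where you write that the known homology of the torus ``pins down'' $H_*(K_\ell)$ and defer ``the technical heart'' to an adaptation of \cite{Fa4}. Knowing $H_*(W)$ and $H_*(W,K_\ell)$ as groups determines nothing: the long exact sequence only gives $0\to\coker\to H_*(K_\ell)\to\ker\to 0$ for the inclusion-induced map, and both the ranks and the absence of torsion depend on identifying that map explicitly. This identification is the real content of the paper's proof and does not follow by routinely citing the perfectness results of \cite{Fa2,Fa4}, which only give the homology of the complementary region. The paper decomposes $H_k(W)=A_k\oplus B_k$ and $H_k(W^{a})=A_k\oplus C_k$ in terms of the toral classes $[W_J]$ (respectively: long subsets containing $n-1$, short-or-median subsets containing $n-1$, long subsets avoiding $n-1$) and then proves, by an intersection-number computation, that for $[W_J]\in C_k$ the image $j_k[W_J]$ has no $B_k$-component: a nonzero coefficient at $[W_K]\in B_k$ would force $K$ to be obtained from the long set $J$ by trading some $j\in J$ for $n-1$, and since $\ell_j\le\ell_{n-1}$ the set $K$ would again be long, a contradiction. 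It is this block-triangularity --- precisely where the choice of the distinguished longest index enters structurally, not just for bookkeeping --- that yields $\ker j_k\cong C_k$ and $\coker j_{k+1}\cong B_{k+1}$, free of ranks $d_k(\ell)$ and $c_{n-3-k}(\ell)$, and rules out torsion in the extension (\ref{exseq}). Your proposal names the right objects and correctly locates the difficulty, but contains no argument playing this role, so the ranks $c_k(\ell)+d_{n-3-k}(\ell)$ are never actually derived.
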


A proof is given in the following section.

\begin{exa}{\rm Consider the zero-dimensional Betti number $b_0(K_\ell)= c_0(\ell)+d_{n-3}(\ell)$. As above, we assume that 
$\ell_1\le \dots\le \ell_{n-1}$. 
Clearly, $c_0(\ell)$ equals $1$ iff the singleton ${n-1}$ is short or median with respect to $\ell$; otherwise $c_0(\ell)=0$. Moreover, for $n>3$ the number $d_{n-3}(\ell)$ equals $1$ iff the set $\{n-2, n-3\}$ is long with respect to $\ell$; 
otherwise $d_{n-3}(\ell)=0$. We obtain that the manifold $K_\ell$ is disconnected if and only if 
\begin{eqnarray}\ell_{n-3}+\ell_{n-2}> \frac{1}{2}\sum_{i=1}^n \ell_i.\end{eqnarray}
}
\end{exa}

\begin{cor}\label{cor1} For $n>3$ the following conditions are equivalent:
\begin{enumerate}
  \item $K_\ell$ is disconnected;
  \item $K_\ell$ consists of two connected components;
   \item The set $\{n-3, n-2\}$ is long with respect to $\ell$;
  \item $K_\ell$ is diffeomorphic to the disjoint union 
$(T^{n-4}\times D^2)\sqcup (T^{n-4}\times D^2).$
 \end{enumerate}
\end{cor}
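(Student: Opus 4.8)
The plan is to prove the chain of implications $(3)\Rightarrow(4)\Rightarrow(2)\Rightarrow(1)\Rightarrow(3)$. Three of these are essentially free: $(4)\Rightarrow(2)$ holds because each factor $T^{n-4}\times D^2$ is connected, so the disjoint union has exactly two components; $(2)\Rightarrow(1)$ is trivial; and $(1)\Rightarrow(3)$ is precisely the content of the Example preceding the corollary, where $b_0(K_\ell)=c_0(\ell)+d_{n-3}(\ell)\ge 2$ is shown to be equivalent to $\ell_{n-3}+\ell_{n-2}>\frac12\sum_{i=1}^n\ell_i$, i.e. to $\{n-3,n-2\}$ being long. Hence the whole weight of the corollary rests on the single implication $(3)\Rightarrow(4)$, for which I would construct an explicit diffeomorphism.

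To do so I would use the robot-arm description of $K_\ell$: fixing the longest bar $\ell_{n-1}$ along the positive real axis to eliminate the $E(2)$-action, $K_\ell$ becomes the set of angle vectors $(\theta_1,\dots,\theta_{n-2})\in T^{n-2}$ for which the grip $G=\ell_{n-1}+\sum_{j=1}^{n-2}\ell_je^{i\theta_j}$ satisfies $|G|\le\ell_n$. I would then separate the two dominant free bars, writing $w=\ell_{n-3}e^{i\theta_{n-3}}+\ell_{n-2}e^{i\theta_{n-2}}$ and $P=\ell_{n-1}+\sum_{j=1}^{n-4}\ell_je^{i\theta_j}$, so that the constraint reads $w\in D(-P,\ell_n)$, the closed disk of radius $\ell_n$ about $-P$. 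The point $b=(\theta_1,\dots,\theta_{n-4})\in T^{n-4}$ determines $P=P(b)$, while the two-bar map $\Phi(\theta_{n-3},\theta_{n-2})=w$ is the classical double cover of the open annulus $\mathcal A=\{r_-<|w|<r_+\}$, where $r_\pm=\ell_{n-2}\pm\ell_{n-3}$; its domain splits as $\Phi^{-1}(\mathcal A)=C_+\sqcup C_-$, two cylinders each carried diffeomorphically onto $\mathcal A$ by $\Phi$.

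The crux --- and the step I expect to be the main obstacle --- is to verify that $D(-P(b),\ell_n)\subset\mathcal A$ for \emph{every} $b\in T^{n-4}$, i.e. that $|P|_{\max}+\ell_n<r_+$ and $|P|_{\min}-\ell_n>r_-$. Since $(3)$ forces $\ell_{n-1}>\ell_n+\sum_{j\le n-4}\ell_j$, the quantity $|P|$ ranges exactly over $[\ell_{n-1}-\sum_{j\le n-4}\ell_j,\ \ell_{n-1}+\sum_{j\le n-4}\ell_j]$. The outer inequality $|P|_{\max}+\ell_n<r_+$ is then literally the long-pair condition $(3)$; the inner inequality $|P|_{\min}-\ell_n>r_-$ rearranges to $\ell_{n-1}+\ell_{n-3}>\ell_{n-2}+\ell_n+\sum_{j\le n-4}\ell_j$, which I would deduce from $(3)$ by observing that its left-hand side exceeds that of $(3)$ by exactly $2(\ell_{n-1}-\ell_{n-2})\ge0$ --- here the ordering $\ell_{n-1}\ge\ell_{n-2}\ge\ell_{n-3}$ is essential, and this is the delicate point. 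Granting this, $K_\ell$ splits as $K^+\sqcup K^-$ according to whether $(\theta_{n-3},\theta_{n-2})\in C_+$ or $C_-$; using $\Phi$ to replace these two coordinates by $w$, each piece becomes $\{(b,w)\in T^{n-4}\times\C:\ |w+P(b)|\le\ell_n\}$, which the fibrewise translation $(b,w)\mapsto(b,w+P(b))$ carries diffeomorphically onto $T^{n-4}\times D^2$. This yields $K_\ell\cong(T^{n-4}\times D^2)\sqcup(T^{n-4}\times D^2)$ and closes the cycle; the case $n=4$ is the degenerate instance $T^0=\{\mathrm{pt}\}$ of the same construction.
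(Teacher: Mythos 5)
Your proposal is correct; the main point to make is that the paper contains no proof to compare it with, since the authors explicitly skip the proofs of Corollaries \ref{cor1} and \ref{cor2}, referring instead to analogous statements in \cite{Ka1} and \cite{Fa4}. Your write-up therefore supplies exactly what is omitted, and it does so by what is essentially the standard mechanism behind those analogous statements: the two bars $\ell_{n-3},\ell_{n-2}$ (the second and third longest \emph{fixed} bars) form an ``elbow'' which, under condition (3), can never pass through a collinear position, so its orientation is locally constant and splits $K_\ell$ into two pieces, each of which trivializes as a product. Your logical skeleton is sound: $(4)\Rightarrow(2)\Rightarrow(1)$ is immediate, $(1)\Rightarrow(3)$ is exactly the Example preceding the corollary (and this is the only place where genericity of $\ell$, via Theorem \ref{thm1}, enters), so all the substance lies in $(3)\Rightarrow(4)$. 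The estimates you flag as delicate do hold: rewriting (3) as $\ell_{n-3}+\ell_{n-2}>\ell_{n-1}+\ell_n+\sum_{j\le n-4}\ell_j$ and combining with $\ell_{n-3}+\ell_{n-2}\le 2\ell_{n-1}$ gives $\ell_{n-1}>\ell_n+\sum_{j\le n-4}\ell_j$, so $|P(b)|$ indeed ranges exactly over $\bigl[\ell_{n-1}-\sum_{j\le n-4}\ell_j,\ \ell_{n-1}+\sum_{j\le n-4}\ell_j\bigr]$; the outer inclusion is (3) verbatim; and the inner inclusion follows because $(\ell_{n-1}+\ell_{n-3})-(\ell_{n-2}+\ell_n+\sum_{j\le n-4}\ell_j)$ exceeds the positive quantity $(\ell_{n-3}+\ell_{n-2})-(\ell_{n-1}+\ell_n+\sum_{j\le n-4}\ell_j)$ by $2(\ell_{n-1}-\ell_{n-2})\ge 0$, using the ordering. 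With $D(-P(b),\ell_n)$ strictly inside the open annulus, the elbow map is a trivial double cover over it, and the fibrewise translation $(b,w)\mapsto(b,w+P(b))$ finishes the diffeomorphism onto $(T^{n-4}\times D^2)\sqcup(T^{n-4}\times D^2)$, the case $n=4$ included. Beyond filling the gap the paper leaves to the literature, your route has the merit that $(3)\Rightarrow(4)$ needs no genericity hypothesis at all, whereas any derivation through the Betti-number formula of Theorem \ref{thm1} does.
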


\begin{cor}\label{cor2} Suppose that $\ell_n>0$ is small in the following sense: for any choice of $\epsilon_j=\pm 1$ where $j=1, \dots, n-1$, such that 
$\sum_{j=1}^{n-1} \epsilon_j \ell_j >0,$
one has 
$\sum_{j=1}^{n-1}\epsilon_j \ell_j >\ell_n.$ Then $K_\ell$ is homotopy equivalent to  $M_{\ell'}$ where $\ell'=(\ell_1, \dots, \ell_{n-1})$. 
\end{cor}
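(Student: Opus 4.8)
The plan is to realize both $K_\ell$ and $M_{\ell'}$ as (sub)level sets of the squared grip-distance function on the open-arm torus and then to show that the smallness hypothesis forces every nonzero critical value to lie above $\ell_n^2$, so that standard Morse theory produces the desired deformation retraction. Concretely, I would use the robot-arm description given above: after using the $E(2)$-action to fix the initial point at the origin $O$ and the first bar along the $x$-axis, the configuration space of the open arm with bars $\ell_1,\dots,\ell_{n-1}$ is the torus $W=T^{n-2}$ with coordinates $(\theta_2,\dots,\theta_{n-1})$, and the grip is the smooth map
$$f(\theta)=\ell_1+\sum_{j=2}^{n-1}\ell_j e^{i\theta_j}\in\C.$$
Setting $g=|f|^2\colon W\to\R$, one has $K_\ell=g^{-1}([0,\ell_n^2])$ and $M_{\ell'}=f^{-1}(0)=g^{-1}(0)$, since the grip returns to $O$ precisely for closed $(n-1)$-gons. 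As $g\ge 0$, the subspace $M_{\ell'}$ is exactly the minimum set of $g$.

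Next I would locate the remaining critical points. Differentiating gives $\partial g/\partial\theta_j=2\ell_j\,\mathrm{Re}(i e^{i\theta_j}\overline f)$, so at a critical point with $f\ne 0$ every unit vector $e^{i\theta_j}$ is parallel to $f$; combined with $e^{i\theta_1}=1$ and $\ell_1>0$ this forces the whole arm to be collinear along the $x$-axis, with $f=\sum_{j=1}^{n-1}\epsilon_j\ell_j$ and critical value $g=\left(\sum_{j}\epsilon_j\ell_j\right)^2$. Genericity of $\ell$ guarantees these values are nonzero, and the hypothesis that $\ell_n$ is small says precisely that each such value exceeds $\ell_n^2$. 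Hence $g$ has no critical value in the interval $(0,\ell_n^2]$.

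Finally I would run the Morse-theoretic argument. Away from the minimum, the absence of critical values in $[\epsilon,\ell_n^2]$ lets the downward gradient flow of $g$ deformation retract $g^{-1}([0,\ell_n^2])=K_\ell$ onto $g^{-1}([0,\epsilon])$ for small $\epsilon>0$. For the passage to the minimum I would verify that $0$ is a regular value of $f$: at a point of $f^{-1}(0)$ the free bars cannot all be parallel, for otherwise the whole arm would be collinear with $\sum_j\epsilon_j\ell_j=0$, contradicting genericity; hence $df=\sum_{j\ge 2}i\ell_j e^{i\theta_j}\,d\theta_j$ is onto $\C$. Thus $M_{\ell'}$ is a smooth codimension-$2$ submanifold and, in the normal coordinates supplied by $f$, the function $g=|f|^2$ is Morse--Bott with $M_{\ell'}$ as a nondegenerate minimum; consequently $g^{-1}([0,\epsilon])$ is a normal disk bundle that deformation retracts onto $M_{\ell'}$. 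Composing the two retractions yields $K_\ell\simeq M_{\ell'}$.

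The main obstacle is the behavior at the minimum set rather than the flow over the regular range. One must establish that $0$ is a regular value of the grip map (the genericity computation above) and that $g$ is genuinely Morse--Bott along $M_{\ell'}$, so that $g^{-1}([0,\epsilon])$ really is a tubular neighborhood retracting onto $M_{\ell'}$. The gradient-flow retraction over the critical-point-free range $[\epsilon,\ell_n^2]$ is routine, but some care is needed because $K_\ell$ is a manifold with boundary $M_\ell=g^{-1}(\ell_n^2)$; one flows inward from this boundary level, which is unproblematic once $\ell_n^2$ is known to be a regular value of $g$.
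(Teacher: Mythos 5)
Your strategy coincides with the one the paper intends: the paper skips this proof, citing the analogous statements in \cite{Ka1, Fa4}, but its Section~\ref{s:4} sets up exactly your framework (the function $f(u)=-\left|\sum_{i<n}\ell_iu_i\right|^2$ on $W\cong T^{n-2}$, with $K_\ell=f^{-1}[-\ell_n^2,0]$ and critical set equal to $f^{-1}(0)=M_{\ell'}$ together with finitely many collinear Morse points), and your two-stage retraction (gradient flow down to level $\epsilon$, then collapse of a neighborhood onto the minimum set) is the standard way those analogous statements are proved. The flow stage is sound: by the strict inequality in the smallness hypothesis (applied after flipping all signs, to cover negative sums), every collinear critical value of $g=|f|^2$ is either $0$ or strictly greater than $\ell_n^2$, so there are no critical points in $g^{-1}[\epsilon,\ell_n^2]$; note that for this step you never actually need the collinear values to be nonzero, so your appeal to genericity there is superfluous.

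The genuine gap is at the minimum set, where your appeal to genericity is wrong rather than superfluous. Genericity of $\ell$ is a condition on the signed sums $\sum_{i=1}^{n}\epsilon_i\ell_i$ of all $n$ entries; it does not forbid $\sum_{j=1}^{n-1}\epsilon_j\ell_j=0$, and neither does the smallness hypothesis, which constrains only strictly positive sums. For example, $\ell=(1,1,1,1,\tfrac12)$ is generic and satisfies the hypothesis of Corollary~\ref{cor2}, yet $\ell'=(1,1,1,1)$ is not generic: here $0$ is not a regular value of the grip map, and $M_{\ell'}$ is a singular variety (three circles glued pairwise at three points) rather than a codimension-two submanifold, so $g$ is not Morse--Bott along it and $g^{-1}[0,\epsilon]$ is not a disk bundle. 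The corollary itself remains true in this case (both sides have $b_0=1$, $b_1=4$, consistent with Theorem~\ref{thm1}), so what your argument proves, as written, is the weaker statement in which $\ell'=(\ell_1,\dots,\ell_{n-1})$ is additionally assumed generic. To close the gap you must either add that hypothesis explicitly, or replace the Morse--Bott step at the minimum by a retraction argument valid for singular minima --- for instance, the fact that the zero set of a nonnegative real-analytic function on a compact manifold is a deformation retract of $g^{-1}[0,\epsilon]$ for all sufficiently small $\epsilon>0$ (a {\L}ojasiewicz-type statement).
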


We will skip the proofs of Corollaries \ref{cor1} and \ref{cor2} since they are analogous to the corresponding statements in \cite{Ka1, Fa4}. 

It is easy to see that under the assumptions of Corollary \ref{cor2} the Betti numbers $b_k(K_\ell)$ coincide with those given by \cite{Fa2}.

\section{Proofs of Proposition \ref{prop1} and Theorem \ref{thm1}} \label{s:4}

Consider the torus $T^{n-1}=S^1 \times \dots \times S^1$ and the quotient $W=T^{n-1}/{\rm {SO(2)}}$ with respect to the diagonal action of the rotation group ${\rm {SO(2)}}$. Clearly, $W$ can be identified with the torus of dimension $n-2$. Consider the function $f: W\to \R$ given by
$$f(u_1, \dots, u_{n-1}) = - \left|\sum_{i=1}^{n-1}\ell_iu_i\right|^2, \quad u_i\in S^1.$$
Then $K_\ell$ can be identified with the preimage
$K_\ell\simeq f^{-1}[a',0],$ where $a'=-(\ell_n)^2.$ 

We know that the critical points of $f$ consist of the preimage $f^{-1}(0)=M_{\ell'}$, where $\ell'=(\ell_1, \dots, \ell_{n-1})$, and finitely many Morse critical points corresponding to collinear configurations, see \cite{Fa2}. It follows that if the vector $\ell$ is generic then $a'$ is a regular point of $f$ and  therefore the preimage $f^{-1}[0,a']$ is a smooth manifold with boundary.
This proves Proposition \ref{prop1}.

In the following arguments we do not assume that $\ell$ is generic. In other words, $a'$ is not necessarily a regular value of $f$. 
Let $a<a'$ be a regular value of $f$ such that the interval $[a,a')$ contains no critical values. 

Denote $W^{a}=f^{-1}(-\infty, a]$. Consider the long exact sequence
$$\to H_{k+1}(W) \to H_{k+1}(W, W^{a})\to H_k(W^{a}) \stackrel{j_k}\to H_k(W) \to \dots$$
where $j_k$ is induced by the inclusion $j: W^{a}\to W$. 
Using excision and Poincar\'e duality we obtain 
$$H_{k+1}(W, W^{a}) \simeq H_{k+1}(f^{-1}[a,0], \partial f^{-1}[a,0])\simeq H^{n-3-k}(f^{-1}[a,0]) \simeq H^{n-k-3}(K_\ell).$$ 
Here we used the observation that $K_\ell\subset f^{-1}[a,0]$ is a deformation retract. 

Thus we obtain the short exact sequence
\begin{eqnarray}\label{exseq}
0\to \coker ( j_{k+1}) \to H^{n-3-k}(K_\ell) \to \ker (j_k) \to 0.
\end{eqnarray}
Therefore to compute the cohomology of $K_\ell$ it is enough to find the kernel and cokernel of the homomorphism $j_k$; note that $H_\ast(W^a)$ and $\ker j_k$ are torsion free
(see below) and therefore the exact sequence (\ref{exseq}) splits. 

Next we describe the homology of the manifold $W^a$ 
following \cite{Fa2,Fa3,Fa4}.
For any subset $J\subset \{1, \dots, n-1\}$ consider the subset $W_J\subset W\simeq T^{n-2}$ consisting of all configurations $(u_1, \dots, u_{n-1})$ such that $u_i=u_j$ for all $i, j\in J$.
In other words, we \lq\lq freeze\rq\rq\,  all links labeled by indices in $J$ to be parallel to each other. It is clear that $W_J$ is diffeomorphic to a torus of dimension $n-1-|J|$.

The torus $W_J$ is contained in $W^{a}$, i.e. $W_J\subset W^{a}$, if and only if $J$ (viewed as a subset of $\{1, \dots, n\}$) is long with respect to $\ell$.
Indeed, let $p_J=(u_1, \dots, u_{n-1})$ be the configuration where $u_i=1$ for all $i\in J$ and
$u_i=-1$ for all $i\notin J$. Then the maximum of the restriction $f|W_J$ is either $0$ or $f(p_J)$, see \cite[Lemma 8, statement (4)]{Fa2}. The inequality $f(p_J)\le a$ 
is equivalent to $$\sum_{i\in J} \ell_i >\frac{1}{2} \sum_{i=1}^n\ell_i$$
which means that $J$ is long with respect to $\ell=(\ell_1, \dots, \ell_n)$.

One may fix naturally orientations of $W$ and all submanifolds $W_J$, see \cite{Fa4}. However in this paper we will not need to deal with specific orientations and will assume 
that the manifolds $W_J$ and $W$ are somehow oriented. The class $[W_J]\in H_k(W)$ is then well-defined where $|J|=n-1-k$. 

If $k+k'=n-2$ and $J, J'\subset \{1, \dots, n-1\}$ are 
subsets with $|J|=n-1-k$, $|J'|=n-1-k'$ then the intersection number $[W_J]\cdot[W_{J'}]$ is either zero (iff $|J\cap J'|>1$) or $\pm1$ (iff $|J\cap J'|=1$); see \cite[formula (33)]{Fa2}.
Note that $k+k'=n-2$ implies that $|J|+|J'|=n$, i.e. the subsets $J$ and $J'$ must have a nontrivial intersection.

Without loss of generality we may assume that $\ell_1\le \ell_2\le \dots \le \ell_{n-1}$. 

One observes that the homology classes realized by the tori $W_I\subset W$, 
where $I$ runs over all subsets $I\subset \{1, \dots, n-1\}$ of cardinality $n-1-k$ containing $n-1$, form a free basis of the homology group $H_k(W)$. We will write
\begin{eqnarray}
H_k(W) =A_k\oplus B_k
\end{eqnarray}
where $A_k$ is generated by the homology classes $[W_J]\in H_k(W)$ with $n-1\in J$, $|J|=n-1-k$ and such that $J$ is long with respect to $\ell$. The subgroup $B_k$ is generated by the classes $[W_J]$ with $J$ satisfying $n-1\in J$, $|J|=n-1-k$ and $J$ is short or median with respect to $\ell$.

By \cite[Corollary 9]{Fa2} the homology classes $[W_J]$ of the submanifolds $W_J$, 
where $J$ runs over all subsets $J\subset \{1, \dots, n-1\}$ of cardinality $n-1-k$ which are long with respect to $\ell$, 
form a basis of the free abelian group $H_k(W^{a})$. This conclusion is based on the technique of Morse theory in the presence of an involution as developed in \cite{Fa2}. 

We will write
\begin{eqnarray}
H_k(W^{a}) =A_k \oplus C_k
\end{eqnarray}
where $A_k$ is defined above and $C_k$ is generated by the classes $[W_J]\in H_k(W^a)$ such that $J\subset \{1, \dots, n-2\}$ is long with respect to $\ell$, and 
$|J|=n-1-k$. 

Note that $C_k=0$ for $k=0$. 

Next we may analyze the homomorphism $j_k: H_k(W^{a}) \to H_k(W)$ induced by the inclusion $W^{a} \subset W$. It is obvious that $j_k[W_J]=[W_J]$ assuming that 
$[W_J]\in A_k$. 

We claim that if $[W_J]\in C_k$ then $j_k[W_J]$ is a linear combination of the classes $[W_K]\in A_k$. Indeed, we may write
\begin{eqnarray}\quad
j_k[W_J]= \sum_{[W_I]\in A_k}a_I\cdot [W_I] + \sum_{[W_K]\in B_k}b_K\cdot [W_K], \quad a_I, b_K\in \Z.
\end{eqnarray}
Then, for $[W_K]\in B_k$, the coefficient $b_K$ equals 
\begin{eqnarray}
b_K= \pm [W_J]\cdot [W_{K'}],
\end{eqnarray}
where $K'$ is obtained from the complement of $K$ in $\{1, \dots, n-1\}$ by adding $n-1$. If $b_K\not=0$ then $J\cap K'=\{j\}$ is a single element and hence $K$ is obtained from 
$J$ by removing $j\in J$ and adding $n-1$. However since $J$ is long with respect to $\ell$ and $\ell_j\le \ell_{n-1}$ it follows that $K$ is long with respect to $\ell$ as well, which contradicts the assumption $[W_K]\in B_k$.  

From the exact sequence (\ref{exseq}) we find that $H^{n-3-k}(K_\ell)$ is free abelian of rank 
\begin{eqnarray}\label{ranks}
\rk\, H^{n-3-k}(K_\ell) =\rk\, H_{n-3-k} (K_\ell)= \rk\, C_k +\rk B_{k+1}.
\end{eqnarray}
Now we may calculate the ranks of $C_k$ and $B_{k+1}$. Examining the definition we see that the rank of $C_k$ equals the number of 
$(k+1)$-element
subsets $J\subset \{1, \dots, n\}$ containing both 
$n-1$ and $n$ which are short with respect to $\ell$. In other words, $\rk\,  C_k=d_k(\ell)$. 
The rank of $B_{k+1}$ equals the number of $(n-2-k)$-element subsets $J\subset \{1, \dots, n-1\}$ which contain $n-1$ and are short or median with respect to $\ell$, i.e. 
$\rk\,  B_{k+1}= c_{n-3-k}(\ell)$. Substituting into (\ref{ranks}) we obtain (\ref{theresult}). \qed

\section{The total Betti number phase transition in the anti-ferromagnetic  mean-field $XY$ model}\label{s:5}

Consider again the anti-ferromagnetic mean-field $XY$ model as discussed in \S 2. 
According to the discussion at the end of \S 2 and by Theorem \ref{thm1}, the Betti numbers $b_k(\mathcal M_v)$ are equal to 
$c_k(\ell)+d_{n-3-k}(\ell)$ with the length vector $\ell$ being the following
$$\ell=(\epsilon_N, \epsilon_N, \dots, \epsilon_N, h, (2v+h^2)^{1/2}),$$
where $\epsilon_N = 1/N$, $n= N+2$ and $h$ denotes the magnetic field. We assume that $h$ is positive and constant, i.e. independent of $N$. 

Our goal is to examine the total Betti numbers 
\begin{eqnarray}\label{bmv}
b({\mathcal M}_v)= \sum_{k=0}^{n-2}b_k({\mathcal M}_v) = c(\ell) + d(\ell)\end{eqnarray}
and their rate of exponential growth
\begin{eqnarray}\label{tauv}
\tau(v)= \lim _{n\to \infty} \frac{\ln b({\mathcal M}_v)}{n}.
\end{eqnarray}
We want to investigate the possibility that singularities of this quantity (which might be more sensitive than (\ref{sigmav})) will allow detection of the phase transition. 
Here $c(\ell)$ denotes $\sum_{k}c_k(\ell)$; if $N$ is large\footnote{i.e. when $h>1/N$.} $c(\ell)$ is the number of all subsets of $\{1, \dots, n\}$ containing $n-1$ and not containing $n$ which are short or median with respect to $\ell$. 
Similarly, $d(\ell)$ denotes $\sum_{k}d_k(\ell)$; for $N$ large $d(\ell)$ it is the number of all subsets of $\{1, \dots, n\}$ containing $n-1$ and $n$ which are short with respect to $\ell$. 

\begin{definition} We will say that the system undergoes {\it a total Betti number phase transition} at $v=v_0$ if the function $\tau(v)$ given by the formula (\ref{tauv}) is not analytic at 
$v=v_0$. 
\end{definition}

We motivate this definition by the topological hypothesis as presented in section \cite[section V.A]{Kastner04}. Namely, it is known that in many cases non-analyticity of the function 
$\sigma(v)$ mentioned in the introduction detects phase transitions of the system. The quantity $\tau(v)$ is defined similarly to $\sigma(v)$, but using the total Betti number rather than the Euler characteristic. 

To illustrate the behavior of $c(\ell)$ and $d(\ell)$, consider first the case where the magnetic field is strong, namely $h>1$. Then every subset containing both the indices $n-1$ and $n$ is long and we obtain $d(\ell)=0$. On the other hand, the value of $c(\ell)$ in this case still depends on $v$. 


Recall that the parameter $v$ varies in the interval $[a_h, b_h]$ where 
$$a_h= \left\{ \begin{array}{lll}
-\frac{1}{2}h^2,& \mbox{for}& h\in (0,1],\\ \\
-h+\frac{1}{2}, &\mbox{for}& h\in [1, \infty),
\end{array}\right.
$$
and $b_h=h+1/2$, see \S 2. 
Let  $p_{v}$ denote the following quantity
$$p_v=\frac{1}{2}((2v+h^2)^{1/2}-h+1).$$
It is easy to check that for $v\in (a_h, b_h)$ one has 
$0< p_v< 1$. Besides, zero belongs to the interval $(a_h, b_h)$ and for $v=0$ one has $p_v=1/2$. 
\begin{thm} For $h > 0$ and $v\in (a_h, b_h)$ the rate of exponential growth of the total Betti number (\ref{tauv}) equals
\begin{eqnarray}\label{rate}
\tau(v) = \left\{
\begin{array}{lll}
-p_v\ln p_v -(1-p_v)\ln(1-p_v), & \mbox{for}& v \le 0,\\ \\
\ln 2, &\mbox{for} & v\ge 0.
\end{array}\right.
\end{eqnarray}
In particular, the function $\tau(v)$ and its first derivative are continuous but the second derivative of $\tau$ is discontinuous at $v=0$. In other words, the system undergoes a total Betti number phase transition at $v=0$. 
\end{thm}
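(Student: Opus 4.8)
The plan is to reduce the total Betti number $b(\mathcal M_v)=c(\ell)+d(\ell)$ from (\ref{bmv}) to partial sums of binomial coefficients and then apply the standard entropy asymptotics. First I would write the length vector explicitly and set $L=(2v+h^2)^{1/2}$. The $N$ legs of length $1/N$ contribute $1$ to the total length, so $S:=\sum_{i=1}^n\ell_i=1+h+L$, and a subset $J$ is short or median exactly when $\sum_{i\in J}\ell_i\le S/2$. Every subset counted by $c(\ell)$ is of the form $J=\{n-1\}\cup T$ with $T\subseteq\{1,\dots,N\}$ and $|T|=m$; the condition $h+m/N\le S/2$ rearranges to $m\le Np_v$ with $p_v=\frac12(L-h+1)$, exactly the quantity in the statement. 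Hence $c(\ell)=\sum_{m\le Np_v}\binom{N}{m}$. Likewise every subset counted by $d(\ell)$ is of the form $\{n-1,n\}\cup T$, and the strict inequality $h+L+m/N<S/2$ rearranges to $m<Nq$ with $q=\frac12(1-h-L)=p_v-L$; thus $d(\ell)=\sum_{m<Nq}\binom{N}{m}$, which is empty when $q\le 0$.

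Next I would invoke the elementary estimate
$$\lim_{N\to\infty}\frac1N\ln\sum_{m\le\alpha N}\binom{N}{m}=\begin{cases} H(\alpha), & 0<\alpha\le \tfrac12,\\ \ln 2, & \alpha\ge\tfrac12,\end{cases}$$
where $H(\alpha)=-\alpha\ln\alpha-(1-\alpha)\ln(1-\alpha)$. This follows from Stirling's formula: for $\alpha<\tfrac12$ the sum lies between its single largest term $\binom{N}{\lfloor\alpha N\rfloor}$ and $(N+1)$ times it, whereas for $\alpha\ge\tfrac12$ it lies between the central coefficient $\binom{N}{\lceil N/2\rceil}$ and $2^N$. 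Since $n=N+2$, replacing $n$ by $N$ in (\ref{tauv}) does not affect the limit.

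I would then assemble $\tau(v)$ using the fact that the exponential growth rate of a sum is the maximum of the rates of its summands, together with the observation that $p_v\ge\tfrac12$ if and only if $L\ge h$, i.e. $v\ge 0$, and that $q=p_v-L<p_v$ always. For $v\ge 0$ we have $p_v\ge\tfrac12$, so $c(\ell)$ alone grows at rate $\ln 2$ and $d(\ell)\le 2^N$; hence $\tau(v)=\ln 2$. For $v<0$ we have $0<p_v<\tfrac12$, so $c(\ell)$ grows at rate $H(p_v)$; and since either $q\le 0$ (so $d(\ell)=0$) or $0<q<p_v<\tfrac12$, the monotonicity of $H$ on $(0,\tfrac12)$ forces $d(\ell)$ to grow at the strictly smaller rate $H(q)$. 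Therefore $\tau(v)=H(p_v)$ for $v<0$, and since $p_0=\tfrac12$ with $H(\tfrac12)=\ln 2$, the two branches match and $\tau$ is continuous.

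Finally, for the regularity I would differentiate the entropy branch. From $H'(p)=\ln\frac{1-p}{p}$, $H''(p)=-\frac{1}{p(1-p)}$ and $p_v'=\frac{1}{2L}$ (so $p_0'=\frac{1}{2h}$) one gets $\tau'(0^-)=H'(\tfrac12)p_0'=0=\tau'(0^+)$, while $\tau''(0^-)=H''(\tfrac12)(p_0')^2=-1/h^2\ne 0=\tau''(0^+)$; thus $\tau$ is $C^1$ but not $C^2$ at $v=0$. The Stirling estimates are routine; the conceptual heart of the argument, and the mechanism of the phase transition, is that $v=0$ is precisely the value at which $p_v$ crosses $\tfrac12$, i.e. the moment the partial sum $c(\ell)$ begins to absorb its largest binomial coefficient. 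The one genuine point to check is that $d(\ell)$ remains exponentially subdominant to $c(\ell)$ throughout $v<0$, which the inequality $q<p_v$ and the monotonicity of $H$ settle.
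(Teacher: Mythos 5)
Your proposal is correct and follows essentially the same route as the paper: both reduce $b(\mathcal M_v)=c(\ell)+d(\ell)$ to partial sums of binomial coefficients with thresholds $p_v$ and $1-p_v-h$, apply Stirling-type entropy asymptotics, and locate the transition at the value $v=0$ where $p_v$ crosses $\tfrac12$. You are in fact somewhat more explicit than the paper in two places it treats tersely or omits: the combinatorial derivation of the binomial-sum formula from the definitions of $c(\ell)$ and $d(\ell)$, and the verification that $\tau$ is $C^1$ but not $C^2$ at $v=0$.
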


\begin{proof} It will be convenient to use the notations 
$$S^n_k=\sum_{0\le i\le k}\binom n i \quad \mbox{and}\quad R^n_k=\sum_{0\le i<k}\binom n i.$$ Using formula (\ref{bmv}) we may write
$$b({\mathcal M}_v)= S^{n-2}_{p_v(n-2)} + R^{n-2}_{(1-p_v-h)(n-2)}.$$
If $v\le 0$ then $1-p_v-h \le p_v\le 1/2$ and therefore
\begin{eqnarray}\label{ineq}
\quad \binom {n-2} {[p_v(n-2)]}\, <\,  b({\mathcal M}_v)\, <\,  2\cdot S^{n-2}_{p_v(n-2)}\le n \cdot \binom {n-2} {[p_v(n-2)]}.
\end{eqnarray}

We will use the following well-known asymptotic formula for the binomial coefficients
$$\binom n m \sim (2\pi)^{-1} \left(\frac {n}{m}\right)^{m}\cdot \left( \frac{n}{n-m}\right)^{n-m} \cdot\left[\frac{m(n-m)}{n}\right]^{-1/2}$$
which is valid if $n, m\to \infty$ and $n-m\to \infty$, see \cite[page 4]{B}. The meaning of the symbol $f(n)\sim g(n)$ is $\lim f(n)/g(n)=1$. 

After some elementary calculations the asymptotic formula above gives
$$\lim_{n\to \infty} \frac{1}{n} \ln \binom {n-2} {[p_v(n-2)]} \, = -p_v\ln p_v -(1-p_v)\ln (1-p_v).$$
Now, the inequalities (\ref{ineq}) imply the first part of (\ref{rate}). 

If $v\ge 0$ then $p_v\ge 1/2$ and $S^{n-2}_{p_v(n-2)} \ge 1/2 \cdot 2^{n-2}= 2^{n-3}.$ Therefore, in this case 
$$2^{n-3} \le b({\mathcal M}_v)\le 2^{n-1}$$
implying $\tau(v)= \ln 2$. This gives the second part of formula (\ref{rate}). 
\end{proof}

\section{Conclusions}\label{s:6}

In this paper we exploited an interpretation of the sub-energy manifolds of the anti-ferromagnetic mean-field $XY$ model as configuration spaces of linkages with one telescopic leg. 
Using Morse theory techniques, enriched with implications which stem from the presence of an involution, we gave a complete computation of the Betti numbers of the sub-energy manifolds.  

As an indicator of phase transitions we studied the exponential growth rate of the total Betti number as opposed to the exponential growth rate of the Euler characteristic, 
as studied by the previous authors. 

We showed by an explicit computation that in the case of non-vanishing magnetic field there is a unique total Betti number phase transition in the anti-ferro\-magnetic mean-field $XY$ model. 

%

We hope that using the total Betti number instead of the Euler characteristic might provide a more sensitive tool for the study of different versions of the topological hypothesis. 
We suggest that the behaviour of the exponential growth rate of the total Betti number of the sub-energy manifolds and its relationship to the physical properties of the system be examined in various models.

\vskip 1cm

The authors are thankful to the anonymous referee for a number of valuable comments and suggestions. 
\newpage

\end{document}